\theoremstyle{plain}
\newtheorem{thm}{Theorem}[section]
\newtheorem{theorem}[thm]{Theorem}
\newtheorem{proposition}[thm]{Proposition}
\newtheorem{corollary}[thm]{Corollary}
\theoremstyle{definition}
\newtheorem{definition}[thm]{Definition}
\newtheorem{remark}[thm]{Remark}
\newtheorem{question}[thm]{Question}
\newtheorem{thevarthm}[thm]{\varthmname}
\newenvironment{varthm*}[1]{\trivlist\item[]{\bf #1.}\it}{\endtrivlist}
\renewcommand\geq{\geqslant}
\renewcommand\leq{\leqslant}
\newcommand\be{\begin{eqnarray*}}
\newcommand\ee{\end{eqnarray*}}
\newcommand\newop[2]{\def#1{\mathop{\rm #2}\nolimits}}
\newop\log{log}
\newop\ord{ord}
\newop\Gal{Gal}
\newop\SL{SL}
\newop\Bl{Bl}
\newop\mult{mult}
\newop\mass{mass}
\newop\div{div}
\newop\codim{codim}
\newop\sing{sing}
\newop\vdim{vdim}
\newop\edim{edim}
\newop\Ass{Ass}
\newop\size{size}
\newop\reg{reg}
\newop\satdeg{satdeg}
\newop\supp{supp}
\newop\Neg{Neg}
\newop\Nef{Nef}
\newop\Nefh{Nef_H}
\newop\Eff{Eff}
\newop\Zar{Zar}
\newop\MB{MB}
\newop\MBxC{MB\mathit{(x,C)}}
\newop\NnB{NnB}
\newop\Bigg{Big}
\newop\Effbar{\overline{\Eff}}
\def\keywordname{{\bfseries Keywords}}%
\def\keywords#1{\par\addvspace\medskipamount{\rightskip=0pt plus1cm
\def\and{\ifhmode\unskip\nobreak\fi\ $\cdot$
}\noindent\keywordname\enspace\ignorespaces#1\par}}
\def\subclassname{{\bfseries Mathematics Subject Classification
(2020)}\enspace}
\def\subclass#1{\par\addvspace\medskipamount{\rightskip=0pt plus1cm
\def\and{\ifhmode\unskip\nobreak\fi\ $\cdot$
}\noindent\subclassname\ignorespaces#1\par}}
\begin{document}
\title{A note on free determinantal hypersurface arrangements in $\mathbb{P}^{14}_{\mathbb{C}}$}
\author{Marek Janasz and Paulina Wi\'sniewska}
\date{\today}
\maketitle
\thispagestyle{empty}
\begin{abstract}
In the present note we study determinantal arrangements constructed with use of the $3$-minors of a $3 \times 5$ generic matrix of indeterminates. In particular, we show that certain naturally constructed hypersurface arrangements in $\mathbb{P}^{14}_{\mathbb{C}}$ are free.
\keywords{hypersurface arrangements, freeness, determinantal arrangements}
\subclass{14N20, 14C20}
\end{abstract}
\section{Introduction}
The main aim of the present note is to find new examples of free hypersurfaces arrangements constructed as the so-called determinantal arrangements. These arrangements possess many interesting homological property and some of them will be outlined. On the other side, computations related to these arrangements are very involving and probably this is the main reason why these object are not well-studied yet. In the note we focus on determinantal arrangements constructed via the $3$ minors of a $3\times 5$ generic matrix.
Before we present our main results, let us summarize briefly the basic concepts (see \cite{Dimca,ScTo} for more details). 

Let $\mathcal{C} \subset \mathbb{P}^{n}$ be an arrangement of reduced and irreducible hypersurfaces and let $\mathcal{C} = V(F)$, where $F = f_{1} \cdots f_{d}$ with ${\rm GCD}(f_{i},f_{j}) = 1$. Denote by ${\rm Der}(S) = S \cdot \partial_{x_{0}} \oplus ... \oplus S\cdot \partial_{x_{n}}$ the ring of polynomial derivations, where $S = \mathbb{K}[x_{0}, ..., x_{n}]$ and $\mathbb{K}$ is a field of characteristic zero. If we take $\theta \in {\rm Der}(S)$, then
$$\theta(f_{1} \cdots f_{d}) = f_{1} \cdot \theta(f_{2} \cdots f_{d}) + f_{2} \cdots f_{d} \cdot \theta(f_{1}).$$ 
Now we can define the ring of polynomial derivations tangent to $\mathcal{C}$ as
$$D(\mathcal{C}) = \{ \theta \in {\rm Der}(S) \, : \, \theta(F) \in F \cdot S \}.$$
An inductive application of the Leibniz formula leads us to the following characterization of $D(\mathcal{C})$, namely
$$D(\mathcal{C}) = \{ \theta \in {\rm Der}(S) \, : \, \theta(f_{i}) \in f_{i} \cdot S \text{ for } i \in \{1, ..., d\}\}.$$
We have the following (automatic) decomposition 
$$D(\mathcal{C}) \simeq E \oplus D_{0}(\mathcal{C}),$$
where $E$ is the Euler derivation and $D_{0}(\mathcal{C}) = {\rm syz}(J_{F})$ is the module of syzygies for the Jacobian ideal $J_{F} = \langle \partial_{x_{0}}F, ..., \partial_{x_{n}}F \rangle$ of the defining polynomial $F$. 
The freeness of $\mathcal{C}$ boils down to a question whether ${\rm pdim}(S/J_{F}) = 2$, which is equivalent to $J_{F}$ being Cohen-Macaulay. One can show that a reduced hypersufrface $\mathcal{C} \subset \mathbb{P}^{n}$ given by a homogeneous polynomial $F=0$ is free if the following condition holds: the minimal resolution of the Milnor algebra $M(F) = S / J_{F}$ has the following short form
$$0 \rightarrow \bigoplus_{i=1}^{n}S(-d_{i} -(d-1)) \rightarrow S^{n+1}(-d+1) \rightarrow S,$$
and the multiset of integers $(d_{1}, ..., d_{n})$ with $d_{1} \leq  ... \leq d_{n}$ is called the set of exponents of $D_{0}(\mathcal{C})$, and we will denote it by ${\rm exp}(\mathcal{C})$.

The literature devoted to determinantal arrangements is not robust. In this context it is worth recalling a general result by  Yim \cite[Theorem 3.3]{Yim}, where he is focusing on determinantal arrangements in $\mathbb{P}^{2n-1}_{\mathbb{C}}$ defined by the products of the $2$-minors. For $i<j$ we denote the $2$-minor of the matrix 
$$N = \begin{pmatrix}
x_{1} & x_{2} & x_{3} & ... & x_{n} \\
y_{1} & y_{2} & y_{3} & ... & y_{n} 
\end{pmatrix}$$
by $\triangle_{ij} = x_{i}y_{j} - x_{j}y_{i}$. Consider arrangement $\mathcal{A}$ defined by the polynomial $F = \prod_{1 \leq i < j \leq n} \triangle_{ij}$ with $n\geq 3$. Then the arrangement $\mathcal{A}$ is free and a basis of $D(\mathcal{A})$ can be very explicitly described.

Our research is motivated by the following question \cite[Question 3.4]{Yim}.
\begin{question}
Let $M$ be the $m\times n$ matrix of indeterminates, and let $F$ be the product of all maximal minors of $M$. Is the arrangement defined by $F$ free for any $n>m>2$?
\end{question}
\begin{remark}
First of all, if $\mathcal{C} : F=0$ is the hypersurface defined by the determinant of a generic $3 \times 3$ matrix of indeterminates, then $\mathcal{C}$ is far away from being free. Buchweitz and Mond  in \cite{Buch} showed that the arrangement defined by the product of the maximal minors of a generic $n \times (n+1)$ matrix of indeterminates is free (and it means that we have the freeness property when $m=3$ and $n=4$), so the first non-trivial and unsolved case (to the best of our knowledge) is when $m=3$ and $n=5$.
\end{remark}
Let us consider the $3 \times 5$ matrix of indeterminates
$$M = \begin{pmatrix}
x_{1} & x_{2} & x_{3} & x_{4} & x_{5} \\
y_{1} & y_{2} & y_{3} & y_{4} & y_{5} \\
z_{1} & z_{2} & z_{3} & z_{4} & z_{5}
\end{pmatrix}.$$
Now for a triple $\{i,j,k\}$ with $i < j < k$ we construct the $3$-minor of $M$ by taking $i$-th, $j$-th, and $k$-th column.
Using the $3$-minors we can get $10$ hypersurfaces $H_{l}  \subset \mathbb{P}^{14}$ which are given by the following defining polynomials:
$$f_{1} = -x_3 y_2 z_1 + x_2 y_3 z_1 + x_3 y_1 z_2 - x_1 y_3 z_2 - x_2 y_1 z_3 + x_1 y_2 z_3,$$
$$f_{2} = -x_4 y_2 z_1 + x_2 y_4 z_1 + x_4 y_1 z_2 - x_1 y_4 z_2 - x_2 y_1 z_4 + x_1 y_2 z_4,$$
$$f_{3} = -x_4 y_3 z_1 + x_3 y_4 z_1 + x_4 y_1 z_3 - x_1 y_4 z_3 - x_3 y_1 z_4 + x_1 y_3 z_4,$$
$$f_{4} = -x_4 y_3 z_2 + x_3 y_4 z_2 + x_4 y_2 z_3 - x_2 y_4 z_3 - x_3 y_2 z_4 + x_2 y_3 z_4,$$ 
$$f_{5} = -x_5 y_2 z_1 + x_2 y_5 z_1 + x_5 y_1 z_2 - x_1 y_5 z_2 - x_2 y_1 z_5 + x_1 y_2 z_5,$$
$$f_{6} = -x_5 y_3 z_1 + x_3 y_5 z_1 + x_5 y_1 z_3 - x_1 y_5 z_3 - x_3 y_1 z_5 + x_1 y_3 z_5,$$
$$f_{7} = -x_5 y_3 z_2 + x_3 y_5 z_2 + x_5 y_2 z_3 - x_2 y_5 z_3 - x_3 y_2 z_5 + x_2 y_3 z_5,$$
$$f_{8} = -x_5 y_4 z_1 + x_4 y_5 z_1 + x_5 y_1 z_4 - x_1 y_5 z_4 - x_4 y_1 z_5 + x_1 y_4 z_5,$$
$$f_{9} = -x_5 y_4 z_2 + x_4 y_5 z_2 + x_5 y_2 z_4 - x_2 y_5 z_4 - x_4 y_2 z_5 + x_2 y_4 z_5,$$
$$f_{10}= -x_5 y_4 z_3 + x_4 y_5 z_3 + x_5 y_3 z_4 - x_3 y_5 z_4 - x_4 y_3 z_5 + x_3 y_4 z_5.$$

Using these $3$-minors we would like to explore new examples of free divisors constructed as determinantal arrangements of hypersurfaces.

In order to show the freeness of such arrangements, we are going to use the following criterion due to Saito -- see for instance \cite[Theorem 8.1]{Dimca}.
Let $\mathcal{C} \subset \mathbb{P}^{n}$ be a reduced effective divisor defined by a homogeneous equation $f = 0$. Now we define the graded module of all Jacobian syzygies as
$${\rm AR}(f) := \bigg\{ r = (a_{0}, ...,a_{n}) \in S^{n+1} \, : \, a_{0}\cdot \partial_{x_{0}}(f) + ... + a_{n} \cdot \partial_{x_{n}}(f) = 0\bigg\}.$$
To each Jacobian relation $r \in {\rm AR}(f)$ one can associate a derivation 
$$D(r) = a_{0}\cdot \, \partial_{x_{0}} + ... + a_{n}\cdot \, \partial_{x_{n}}$$
that kills $f$, i.e., $D(r)(f)=0$. One can additionally show that in fact ${\rm AR}(f)$ is isomorphic, as a graded $S$-module, with $D_{0}(\mathcal{C})$.

\begin{theorem}

The homogeneous Jacobian syzygies $r_{i} \in {\rm AR}(f)$ for $i \in \{1, ...,n\}$ form a basis of this $S$-module if and only if
$$\phi(f) = c \cdot f,$$
where $\phi(f)$ is the determinant of the $(n+1)\times (n+1)$ matrix $\Phi(f) = (r_{ij})_{0 \leq i,j \leq n}$, $r_{0} := (x_{0}, ..., x_{n})$, and $c$ is a non-zero constant.
\end{theorem}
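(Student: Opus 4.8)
The plan is to prove the following symmetric form of Saito's criterion and then specialise it. If $\theta_0,\dots,\theta_n\in D(\mathcal C)$ are logarithmic derivations and $N$ is the $(n+1)\times(n+1)$ matrix whose $i$-th row is the coefficient vector of $\theta_i$ in the basis $\partial_{x_0},\dots,\partial_{x_n}$, then $f\mid\det N$, and $\det N=c\cdot f$ with $c\in\mathbb K^{\times}$ if and only if $\theta_0,\dots,\theta_n$ form a basis of $D(\mathcal C)$. This applies here because $r_0=(x_0,\dots,x_n)$ is the coefficient vector of the Euler derivation $E$, with $E(f)=d\cdot f\in f\cdot S$, and each $r_i$ ($i\ge1$) is the coefficient vector of $D(r_i)$, with $D(r_i)(f)=0\in f\cdot S$; hence every row of $\Phi(f)$ is the coefficient vector of an element of $D(\mathcal C)$. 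The statement about ${\rm AR}(f)$ then follows by moving between $D(\mathcal C)=S\cdot E\oplus D_0(\mathcal C)$ and the isomorphism $D_0(\mathcal C)\cong{\rm AR}(f)$, $r\mapsto D(r)$.

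First I would prove the divisibility $f\mid\det N$. Let $v$ be the column vector with entries $\partial_{x_j}f$ and write $\theta_i(f)=c_if$ with $c_i\in S$. Then $N v=f\cdot(c_0,\dots,c_n)^{\mathrm T}$, and multiplying by the adjugate matrix gives $\det(N)\cdot v=f\cdot{\rm adj}(N)\cdot(c_0,\dots,c_n)^{\mathrm T}$, so $f\mid\det(N)\,\partial_{x_j}f$ for every $j$. Since $f$ is reduced and $\mathrm{char}\,\mathbb K=0$, no irreducible factor $p$ of $f$ divides all the partials $\partial_{x_j}f$: writing $f=pq$ with $p\nmid q$, divisibility of each $\partial_{x_j}f=q\,\partial_{x_j}p+p\,\partial_{x_j}q$ by $p$ would force $\partial_{x_j}p=0$ for all $j$, i.e.\ $p$ constant. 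Choosing $j$ with $p\nmid\partial_{x_j}f$ then gives $p\mid\det N$, and as $f$ is squarefree, $f\mid\det N$.

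For the ``if'' direction, assume $\phi(f)=c f$ with $c\in\mathbb K^{\times}$. Then $\det\Phi(f)\ne0$, so $r_0,\dots,r_n$ are $S$-linearly independent. Given any $\theta\in D(\mathcal C)$ with coefficient vector $b$, Cramer's rule over $\mathrm{Frac}(S)$ produces the unique $g_i$ with $b=\sum_ig_ir_i$, namely $g_i=\det(\Phi_i(f))/(cf)$, where $\Phi_i(f)$ is $\Phi(f)$ with its $i$-th row replaced by $b$. Every row of $\Phi_i(f)$ is again the coefficient vector of a derivation in $D(\mathcal C)$, so the divisibility statement gives $f\mid\det\Phi_i(f)$, whence $g_i\in S$. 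Thus $r_0,\dots,r_n$ generate, and therefore form a basis of, $D(\mathcal C)$. Applying the projection $D(\mathcal C)\to S\cdot E$ along $D_0(\mathcal C)$ to the expansion $\eta=\sum_{i=0}^ng_ir_i$ of an arbitrary $\eta\in D_0(\mathcal C)$ forces $g_0=0$; hence $r_1,\dots,r_n$ is a basis of $D_0(\mathcal C)\cong{\rm AR}(f)$.

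The ``only if'' direction is the step I expect to be the main obstacle. If $r_1,\dots,r_n$ is a basis of ${\rm AR}(f)$, then $D_0(\mathcal C)$, and hence $D(\mathcal C)=S\cdot E\oplus D_0(\mathcal C)$, is free with basis $\{E,r_1,\dots,r_n\}$, and the divisibility statement gives $\phi(f)=h f$ with $h\in S$, nonzero by linear independence. To see $h\in\mathbb K^{\times}$, I would localise at each height-one prime $\mathfrak q\subset S$. There $D(\mathcal C)_{\mathfrak q}$ is free over the DVR $S_{\mathfrak q}$ and its coefficient determinant in the basis $\partial_{x_0},\dots,\partial_{x_n}$ equals $f$ up to a unit of $S_{\mathfrak q}$: either $f\notin\mathfrak q$, in which case $D(\mathcal C)_{\mathfrak q}={\rm Der}(S)_{\mathfrak q}$ and the determinant is a unit, or $\mathfrak q$ is the generic point of a component of $\mathcal C$, which is a smooth point of $\mathcal C$ because $f$ is reduced, and then one may pick a basis $\partial,\partial_2,\dots,\partial_{n+1}$ of ${\rm Der}(S)_{\mathfrak q}$ with $\partial(f)=1$ and $\partial_k(f)=0$, so that $D(\mathcal C)_{\mathfrak q}$ is free on $f\partial,\partial_2,\dots,\partial_{n+1}$, with coefficient determinant $f$ up to the unit relating this basis to $\partial_{x_0},\dots,\partial_{x_n}$. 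Since $\{E,r_1,\dots,r_n\}$ is also a basis of $D(\mathcal C)_{\mathfrak q}$, the two bases differ by a matrix in ${\rm GL}_{n+1}(S_{\mathfrak q})$, so $\phi(f)$ equals $f$ up to a unit of $S_{\mathfrak q}$, i.e.\ $h$ is a unit of $S_{\mathfrak q}$. As this holds for every height-one prime and $S$ is a UFD, $h\in\mathbb K^{\times}$, so $\phi(f)=c f$. The delicate point is precisely the local structure of $D(\mathcal C)$ at the generic point of a component of $\mathcal C$ — the fact that a smooth divisor is free with coefficient determinant $f$ up to a unit, which rests on $\mathrm{char}\,\mathbb K=0$; everything else is Cramer's rule together with the reducedness of $f$.
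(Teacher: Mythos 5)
Your argument is correct and complete. Note, however, that the paper does not prove this statement at all: it is quoted as Saito's criterion with a pointer to \cite[Theorem 8.1]{Dimca}, so there is no internal proof to compare against. What you have written is essentially the standard proof from the literature: the divisibility $f\mid\det N$ via the adjugate identity together with the fact that a reduced $f$ in characteristic zero shares no irreducible factor with all of its partials; Cramer's rule plus that divisibility for the ``if'' direction; and the reduction of the ``only if'' direction to height-one localizations, where $D(\mathcal{C})_{\mathfrak{q}}$ is either all of ${\rm Der}(S)_{\mathfrak{q}}$ or free on $f\partial,\partial_2,\dots,\partial_{n+1}$ at the (generically smooth) components of $V(f)$. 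Your handling of the passage between ${\rm AR}(f)\cong D_0(\mathcal{C})$ and $D(\mathcal{C})=S\cdot E\oplus D_0(\mathcal{C})$ in both directions is also the right bookkeeping, and you correctly isolate the two places where ${\rm char}\,\mathbb{K}=0$ and reducedness of $f$ are actually used.
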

Saito's criterion is a very powerful tool under the assumption that we have a set of potential candidates that might form a basis of ${\rm AR}(f)$, so our work boils down to finding appropriate sets of Jacobian relations that will lead us to a basis of ${\rm AR}(f)$ for a given arrangement $\mathcal{C} : \, f=0$.

Here is our first result of the note.

\begin{theorem}
\label{theoremA}
Let us consider the following hypersurfaces arrangements
$$\mathcal{C}_{j} : F_{j} = f_{1}f_{2}f_{3}f_{4}f_{j} \quad \text{ for } j\in \{5, ...,10\}.$$
Then $C_{j}$ is free with the exponents $(\underbrace{1, ..., 1}_{14 \text{ times}})$.
\end{theorem}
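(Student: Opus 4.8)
\emph{Reduction and strategy.} The symmetric group $S_4$ acting by permuting the columns $(x_i,y_i,z_i)$, $i\in\{1,2,3,4\}$, of $M$ induces projective automorphisms of $\mathbb{P}^{14}$ which permute $\{f_1,f_2,f_3,f_4\}$ (the four $3$-minors on columns $1,2,3,4$ only) among themselves and act transitively on $\{f_5,\dots,f_{10}\}$ (the six $3$-minors involving column $5$, indexed by the $2$-subsets of $\{1,2,3,4\}$). Hence the six divisors $\mathcal C_5,\dots,\mathcal C_{10}$ are projectively equivalent, and it suffices to treat one of them; I take $\mathcal C_{10}$, so $F:=F_{10}=f_1f_2f_3f_4f_{10}=\Delta_{123}\Delta_{124}\Delta_{134}\Delta_{234}\Delta_{345}$ where $\Delta_{pqr}$ denotes the $3$-minor on columns $p,q,r$. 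The plan is to write down $14$ explicit linear Jacobian syzygies and verify Saito's criterion. These come from the natural action of $\mathfrak{gl}_3\oplus\mathfrak{gl}_5$ on $\operatorname{Mat}_{3\times5}$ by $M\mapsto AM$ and $M\mapsto MB$: for $A\in\mathfrak{sl}_3$ the induced linear derivation $\theta_A$ satisfies $\theta_A(\Delta_{pqr})=\operatorname{tr}(A)\,\Delta_{pqr}=0$ for every minor, hence $\theta_A(F)=0$; this already supplies an $8$-dimensional subspace of $\mathrm{AR}(F)$.

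\emph{The remaining six syzygies.} For a diagonal $B=\operatorname{diag}(b_1,\dots,b_5)$ one has $\theta_B(\Delta_{pqr})=(b_p+b_q+b_r)\Delta_{pqr}$, so $\theta_B(F)=\bigl(3b_1+3b_2+4b_3+4b_4+b_5\bigr)F$ (the coefficient of $b_i$ being the number of the five chosen triples through $i$). Imposing $3b_1+3b_2+4b_3+4b_4+b_5=0$ cuts out a $4$-dimensional space of $\theta_B\in\mathrm{AR}(F)$, complementary to the Euler direction $b_1=\dots=b_5$ (for which the coefficient equals $15\ne0$). Finally, the two nilpotent linear derivations $\theta_{E_{35}}=x_3\partial_{x_5}+y_3\partial_{y_5}+z_3\partial_{z_5}$ and $\theta_{E_{45}}=x_4\partial_{x_5}+y_4\partial_{y_5}+z_4\partial_{z_5}$ annihilate every $f_\ell$, $\ell\in\{1,2,3,4,10\}$: column $5$ occurs only in $\Delta_{345}$, and substituting column $3$ (resp.\ $4$) into its slot produces a determinant with a repeated column. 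Altogether this is a $14$-dimensional family $r_1,\dots,r_{14}\in\mathrm{AR}(F)$, visibly $\mathbb K$-linearly independent since the three blocks lie in the $\mathfrak{sl}_3$-part, the diagonal $\mathfrak{gl}_5$-part, and the $(3,5),(4,5)$-entry part of $\mathfrak{gl}_3\oplus\mathfrak{gl}_5$, none of them being the Euler derivation.

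\emph{Applying Saito's criterion.} Let $\Phi$ be the $15\times15$ matrix whose row $0$ is the Euler vector $(x_1,y_1,z_1,\dots,x_5,y_5,z_5)$ and whose rows $1,\dots,14$ are the coefficient vectors of $r_1,\dots,r_{14}$. Then $\Phi\cdot\nabla F=(15F,0,\dots,0)^{\mathsf T}$, and Cramer's rule gives $\det\Phi\cdot\partial_v F=15F\cdot(\operatorname{adj}\Phi)_{0v}$ for each variable $v$. Since $F$ is reduced and no irreducible factor $f_\ell$ divides all of $F$'s partials (for $\partial_vF\equiv(\partial_vf_\ell)\cdot(F/f_\ell)\bmod f_\ell$ and $\deg\partial_vf_\ell<\deg f_\ell$), each $f_\ell$—hence $F$—divides $\det\Phi$. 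As $\deg\det\Phi=1+14=15=\deg F$, we get $\det\Phi=c\,F$ for a constant $c$; by the criterion above it remains only to show $c\neq0$.

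\emph{Non-vanishing of $c$ — the main obstacle.} This is equivalent to the $15$ rows of $\Phi$ being linearly independent over $\operatorname{Frac}(S)$, i.e.\ to the $15$-dimensional Lie subalgebra of $\mathfrak{gl}_3\oplus\mathfrak{gl}_5$ spanned by the Euler field and $r_1,\dots,r_{14}$ acting on $\operatorname{Mat}_{3\times5}$ with a dense orbit. I expect this to be the delicate, computation-heavy step (and the reason the paper calls the computations ``very involving''): the cleanest route is to exhibit a single explicit matrix $M_0$ with $F(M_0)\ne0$ at which the $15$ coefficient vectors span $\mathbb K^{15}$, reducing the whole matter to one numerical rank (or determinant) computation; a brute-force symbolic expansion of $\det\Phi$ confirming $\det\Phi=c\,F$ with $c\ne0$ is an alternative. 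Once $c\neq0$ is established, Saito's criterion shows $r_1,\dots,r_{14}$ form a basis of $D_0(\mathcal C_{10})=\mathrm{AR}(F_{10})$, so $\mathcal C_{10}$ — and therefore every $\mathcal C_j$, $j\in\{5,\dots,10\}$, by the symmetry above — is free with exponents $(1,\dots,1)$.
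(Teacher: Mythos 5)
Your construction of the fourteen syzygies is correct and in fact identical in substance to the paper's: the $\theta_1,\dots,\theta_{14}$ listed in the paper's proof for $F_5$ are precisely elements of the $\mathfrak{gl}_3\oplus\mathfrak{gl}_5$-action you describe (the $\mathfrak{sl}_3$ row operations, the two column substitutions $E_{15},E_{25}$ playing the role of your $E_{35},E_{45}$, and diagonal left/right combinations satisfying your trace condition $\sum_i n_i b_i=0$). Your $S_4$-symmetry reduction to a single $j$ and your conceptual explanation of where the syzygies come from are both valid and are genuine expository improvements over the paper, which merely lists the syzygies as \verb{Singular{ output and asserts that the remaining five cases ``go along the same lines.'' The divisibility argument $F\mid\det\Phi$ and the degree count giving $\det\Phi=c\,F$ are also fine.

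The gap is the step you yourself flag and then leave undone: you never establish $c\neq 0$. This cannot be waved through as a genericity statement, because it is the entire content of the theorem. The paper's own remarks show that for other quintuples of $3$-minors (e.g.\ $f_1f_2f_3f_5f_{10}$, or $f_6f_7f_8f_9f_{10}$) the resulting arrangement is \emph{not} free, so whether the fifteen coefficient vectors are independent over $\mathrm{Frac}(S)$ depends delicately on the combinatorics of the chosen triples; the heuristic that the $15$-dimensional Lie algebra ``should'' have a dense orbit is exactly the kind of expectation these counterexamples defeat. Concretely, you must either evaluate the $15\times 15$ coefficient matrix at one explicit $M_0$ and check that its rank is $15$, or expand $\det\Phi$ symbolically; the paper does the latter and reports $\det A=9375\cdot F_5$. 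Until one of these finite computations is actually carried out, your argument proves only that $\mathrm{AR}(F_j)$ contains fourteen $\mathbb{K}$-linearly independent linear syzygies, which is strictly weaker than freeness. Everything else is correct, so supplying that single evaluation would complete the proof.
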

\begin{corollary}
In the setting of the above theorem, one has
$${\rm reg}(S/J_{F_{j}}) = 13$$
for each $j \in \{5, ..., 10\}$, so we reach an upper bound for the regularity according to the content of \cite[Proposition 2.6]{BSDS}.
\end{corollary}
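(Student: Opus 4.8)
The plan is short, since the corollary is a formal consequence of Theorem~\ref{theoremA} together with the explicit shape of the minimal free resolution that freeness forces. First I would record the numerical data: each $f_i$ is a $3\times 3$ minor of $M$, hence a homogeneous cubic, so $F_j = f_1f_2f_3f_4f_j$ has degree $d=15$; the ring $S$ has $15$ variables, so the ambient space is $\mathbb{P}^{14}$ and $n=14$. By Theorem~\ref{theoremA} the divisor $\mathcal{C}_j$ is free, and then the exponents are in fact forced: for a free divisor in $\mathbb{P}^n$ the exponents sum to $d-1$, so here the fourteen exponents must sum to $14$ and hence all equal $1$ (this is, of course, also part of the statement of Theorem~\ref{theoremA}). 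Substituting $d_1=\cdots=d_{14}=1$, $d=15$ and $n=14$ into the resolution recalled in the introduction, the minimal graded free resolution of the Milnor algebra $M(F_j)=S/J_{F_j}$ becomes
$$0 \longrightarrow S(-15)^{14} \longrightarrow S(-14)^{15} \longrightarrow S \longrightarrow M(F_j) \longrightarrow 0.$$

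Next I would read off the Castelnuovo--Mumford regularity. For a finitely generated graded $S$-module whose minimal free resolution has $i$-th term $\bigoplus_j S(-a_{ij})$ one has ${\rm reg}(M) = \max_{i,j}(a_{ij}-i)$; here the contributions are $0-0=0$ from $S$, $14-1=13$ from $S(-14)^{15}$, and $15-2=13$ from $S(-15)^{14}$, so ${\rm reg}(S/J_{F_j}) = 13$ for every $j\in\{5,\dots,10\}$. No minimality question arises, because Theorem~\ref{theoremA} asserts precisely that the three-term resolution above is the minimal one; equivalently, the two nonzero syzygy modules are generated in the distinct degrees $14$ and $15$, so there is nothing to cancel and the graded Betti numbers are exactly $1,15,14$.

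Finally I would compare $13$ against the general upper bound for ${\rm reg}(S/J_F)$ of a reduced hypersurface of degree $d$ in $\mathbb{P}^n$ provided by \cite[Proposition~2.6]{BSDS}: for $d=15$ and $n=14$ that bound evaluates to $13$, so the arrangements $\mathcal{C}_j$ are extremal for it, which is the assertion of the corollary. I do not expect any genuine obstacle here: all of the substantive work is carried out in Theorem~\ref{theoremA}, and what remains is bookkeeping with the associated free resolution together with a single invocation of the quoted regularity bound.
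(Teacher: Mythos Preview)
Your argument is correct. The paper states this corollary without proof, treating it as an immediate consequence of Theorem~\ref{theoremA}; you have simply written out the bookkeeping the authors leave implicit, namely substituting $d=15$, $n=14$, $d_1=\cdots=d_{14}=1$ into the shape of the minimal resolution for a free divisor and reading off $\mathrm{reg}(S/J_{F_j})=\max\{0,\,14-1,\,15-2\}=13$, then matching this against the bound from \cite[Proposition~2.6]{BSDS}.
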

\begin{remark}
Of course not every combination of $5$ defining equations $f_{i},f_{j},f_{k},f_{l},f_{m}$ leads to an example of a free determinantal arrangement. Consider $\mathcal{A}: \, V(f_{1}f_{2}f_{3}f_{5}f_{10})=0$, then the minimal free resolution of the Milnor algebra $M(F)=S/J_{F}$ with $F = f_{1}f_{2}f_{3}f_{5}f_{10}$ has the following form:
$$0 \rightarrow S(-19)^{3} \rightarrow S^{4}(-18) \oplus S^{13}(-15) \rightarrow S^{15}(-14)\rightarrow S,$$
so the projective dimension is equal to $3$.

Moreover, not every choice of $5$ \emph{consecutive} hyperplanes leads to a free arrangement. Consider $\mathcal{B} \, : V(f_{6}f_{7}f_{8}f_{9}f_{10}) = 0$, then the minimal free resolution of the Milnor algebra has the following form
$$0 \rightarrow S(-16)^{3} \rightarrow S^{1}(-18) \oplus S^{16}(-15) \rightarrow S^{15}(-14)\rightarrow S,$$
so $\mathcal{B}$ is not free.
\end{remark}

The ultimate goal of the present paper is the understand whether we can expect a positive answer on a (sub)question  devoted to the freeness of the full determinantal arrangement in $\mathbb{P}^{14}$. 

\begin{question}
\label{maintheorem}
Let us consider the following hypersurfaces arrangements $\mathcal{H} \, :\, V(F) = 0$  defined by $F=f_{1}f_{2}f_{3}f_{4}f_{5}f_{6}f_{7}f_{8}f_{9}f_{10}$. Is it true that $\mathcal{H}$ is free?
\end{question}

Towards approaching the above question, we study mid-step defined arrangements, namely those having the defining equation $Q_{k} = f_{1}f_{2}f_{3}f_{4}f_{5}f_{k}$ with $k \in \{6,7,8,9,10\}$. In particular, we can show the following results.

\begin{theorem}
\label{mid}
Let us consider the hypersurfaces arrangement
$$\mathcal{H}_{k} \, :\, V(Q_{k}) = 0$$
given by $Q_{k}=f_{1}f_{2}f_{3}f_{4}f_{5}f_{k}$ with $k \in \{6, 7, 8, 9\}$. Then $\mathcal{H}_{k}$ is free with the exponents $(\underbrace{1, ..., 1}_{13 \text{ times}},4)$.
\end{theorem}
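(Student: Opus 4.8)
Since the arrangements $\mathcal{H}_{6},\mathcal{H}_{7},\mathcal{H}_{8},\mathcal{H}_{9}$ are projectively equivalent --- the transpositions $(1\,2)$ and $(3\,4)$ of the column blocks of $M$, together with their product, permute $f_{1},f_{2},f_{3},f_{4}$ among themselves, fix $V(f_{5})$, and carry $V(f_{6})$ to $V(f_{7})$, $V(f_{8})$ and $V(f_{9})$ --- it suffices to treat $k=6$, i.e.\ $Q_{6}=f_{1}f_{2}f_{3}f_{4}f_{5}f_{6}$, a homogeneous polynomial of degree $18$. The plan is to apply Saito's criterion stated above: to produce an explicit family $r_{1},\dots,r_{14}\in{\rm AR}(Q_{6})$ in which $r_{1},\dots,r_{13}$ have linear coefficients and $r_{14}$ has quartic coefficients, and to verify that the determinant $\phi(Q_{6})$ of the $15\times 15$ matrix whose first row is the Euler derivation $r_{0}=(x_{1},\dots,x_{5},y_{1},\dots,y_{5},z_{1},\dots,z_{5})$ and whose remaining rows are $r_{1},\dots,r_{14}$ satisfies $\phi(Q_{6})=c\cdot Q_{6}$ for a nonzero constant $c$. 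Since $1+(13\cdot 1+4)=18=\deg Q_{6}$, the degrees are compatible; once the identity holds, Saito's criterion forces $r_{1},\dots,r_{14}$ to be a basis of ${\rm AR}(Q_{6})\cong D_{0}(\mathcal{H}_{6})$, and the exponents are then exactly $(1,\dots,1,4)$.

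The thirteen linear syzygies are read off from the symmetries of $M$. Writing $m_{pq}$ for the $(p,q)$ entry of $M$, each $A\in\mathfrak{sl}_{3}$ gives the derivation $\theta_{A}=\sum_{p,q}\big(\sum_{p'}A_{pp'}m_{p'q}\big)\partial_{m_{pq}}$ coming from infinitesimal left multiplication; since $\det(gM')=\det(g)\det(M')$ for every $3\times 3$ submatrix $M'$, the derivation $\theta_{A}$ annihilates every $3$-minor $f_{1},\dots,f_{10}$, hence lies in ${\rm AR}(Q_{6})$, and this contributes $8$ linear syzygies. The derivations $\eta_{B}$ attached to diagonal matrices $B\in\mathfrak{gl}_{5}$ (infinitesimal column rescalings) multiply each minor by a scalar; the $5$-dimensional space of such $\eta_{B}$ contains the Euler derivation, and a $4$-dimensional complement gives, after subtracting suitable multiples of $r_{0}$, four further linear syzygies in ${\rm AR}(Q_{6})$, independent of the previous eight. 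Finally, let $\eta$ be the derivation attached to the elementary matrix $E_{1,5}\in\mathfrak{gl}_{5}$, which replaces column $5$ of $M$ by column $1$. Since $f_{1},f_{2},f_{3},f_{4}$ do not involve column $5$, while $f_{5}$ (on columns $1,2,5$) and $f_{6}$ (on columns $1,3,5$) both involve column $1$, this substitution creates a repeated column in $f_{5}$ and in $f_{6}$; thus $\eta$ kills all six factors of $Q_{6}$ and supplies a thirteenth linear syzygy, clearly independent of the other twelve. This is exactly where the hypothesis $k\neq 10$ enters: the minor $f_{10}$, on columns $3,4,5$, shares only column $5$ with $f_{5}$, so no analogous derivation exists, which is consistent with $\mathcal{H}_{10}$ falling outside the present family.

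There remains one more Jacobian syzygy $r_{14}$, of degree $4$, and this is the computational core of the argument. I would obtain it by computing the graded piece ${\rm AR}(Q_{6})_{4}$ directly --- solving the corresponding linear system over $S$ --- and extracting an element not contained in the $S$-submodule generated by $r_{1},\dots,r_{13}$, i.e.\ a genuinely new generator in degree $4$, which can then be written down explicitly. With $r_{0},\dots,r_{14}$ fixed, the last step is to expand the $15\times 15$ determinant $\phi(Q_{6})$ and to confirm the identity $\phi(Q_{6})=c\,Q_{6}$ with $c\neq 0$; the nonvanishing of $c$ is checked by specializing the $15$ variables to a single sufficiently general numerical point. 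The main obstacle is precisely this verification: $\Phi(Q_{6})$ is a dense $15\times 15$ matrix of polynomials in $15$ variables whose bottom row has quartic entries, so the expansion and the comparison with $Q_{6}$ have to be performed with a computer algebra system such as \texttt{Singular} or \texttt{Macaulay2}; the real content is that the quartic syzygy can be chosen so that $\phi(Q_{6})$ is a nonzero scalar multiple of $Q_{6}$, rather than vanishing identically or acquiring a spurious polynomial factor. The cases $k\in\{7,8,9\}$ then follow from $k=6$ via the projective equivalences noted at the outset.
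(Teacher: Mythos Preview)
Your strategy is exactly the paper's: apply Saito's criterion by exhibiting thirteen linear syzygies and one quartic syzygy, then check that the Saito determinant is a nonzero scalar multiple of $Q_{k}$. The paper treats $k=7$ as its representative case, writes out the thirteen linear derivations (which are precisely the $\mathfrak{sl}_{3}$-, diagonal $\mathfrak{gl}_{5}$-, and $E_{1,5}$-type vectors you describe, just listed rather than explained), produces an explicit quartic $\theta_{14}$ found with \texttt{Singular}, and reports $\det A = 23328\cdot Q_{7}$.

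Two points worth flagging. First, your reduction by the column transpositions $(1\,2)$ and $(3\,4)$ is a genuine improvement over the paper, which simply asserts that ``other cases can be shown in an analogical way'' without noting the projective equivalence; your argument makes this rigorous and dispenses with three of the four computations. Second, the actual content of the proof---and the only place it could fail---is the existence of the quartic syzygy $r_{14}$ making the Saito determinant nonzero. You correctly identify this as the computational core but do not carry it out; the paper does, and its explicit $\theta_{14}$ (a fairly complicated cubic-coefficient derivation in their presentation, degree~$4$ after accounting for the listing convention) is the substantive datum. Your outline is sound, but without that computation in hand it remains a plan rather than a proof; the paper's contribution is precisely to have run it and recorded the answer.
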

\begin{corollary}
In the setting of the above theorem, one has
$${\rm reg}(S/J_{Q_{k}}) = 19$$
for each $k \in \{6,7,8,9\}$, so we reach an upper bound  for the regularity according to the content of \cite[Proposition 2.6]{BSDS}.
\end{corollary}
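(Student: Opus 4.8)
The plan is to read the Castelnuovo--Mumford regularity of the Milnor algebra directly off the minimal free resolution supplied by Theorem~\ref{mid}, so that the corollary becomes a purely formal consequence of that theorem together with the shape of the resolution recorded in the introduction.

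First I would collect the numerical data. Each factor $f_i$ is a $3$-minor, hence a cubic, so $Q_k = f_1f_2f_3f_4f_5f_k$ has degree $d = 6\cdot 3 = 18$ and the partials $\partial_{x_i}(Q_k)$ have degree $d-1 = 17$. By Theorem~\ref{mid} the arrangement $\mathcal{H}_k$ is free with exponents $(d_1,\dots,d_{14}) = (1,\dots,1,4)$, so the minimal free resolution of $S/J_{Q_k}$ specializes the short form from the introduction to
$$0 \to \bigoplus_{i=1}^{14} S(-d_i - 17) \to S^{15}(-17) \to S \to S/J_{Q_k} \to 0,$$
where the leftmost term equals $S^{13}(-18) \oplus S(-21)$ once the exponents are inserted.

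Next I would invoke the standard description of regularity through a minimal graded free resolution: if the $i$-th term is $\bigoplus_j S(-a_{ij})$, then ${\rm reg}(S/J_{Q_k}) = \max_{i,j}(a_{ij}-i)$. The three homological spots contribute $0-0 = 0$, then $17-1 = 16$, and finally $\max(18-2,\,21-2) = 19$, the top value coming from the single generator of degree $4+17 = 21$ attached to the largest exponent $d_{14} = 4$. The maximum over all spots is therefore $19$, which is the asserted value of ${\rm reg}(S/J_{Q_k})$.

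Finally, to justify the last clause I would compare $19$ with the bound of \cite[Proposition 2.6]{BSDS}. Because the divisor is free, the exponents satisfy $\sum_i d_i = d-1 = 17$; since thirteen of them are at least $1$, the top exponent cannot exceed $d - n = 18 - 14 = 4$, and here it attains exactly this maximal value. Equivalently ${\rm reg}(S/J_{Q_k}) = d_{14} + d - 3 \le (d-n) + d - 3 = 19$ with equality, which is precisely the configuration realizing the upper bound. There is no genuine obstacle in this argument, as everything is determined once Theorem~\ref{mid} is granted; the only point demanding care is the bookkeeping of the graded twists and the verification that the maximum in the regularity formula is achieved by the degree-$4$ strand rather than by the linear part.
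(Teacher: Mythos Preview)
Your argument is correct and is exactly the computation one has in mind; the paper itself gives no proof of this corollary, treating it as an immediate consequence of Theorem~\ref{mid} together with \cite[Proposition~2.6]{BSDS}, and you have simply written out the standard extraction of ${\rm reg}(S/J_{Q_k})$ from the minimal free resolution. The only remark is that in your last paragraph the inequality $d_{14}\le d-n$ is being inferred from the relation $\sum_i d_i=d-1$ together with $d_i\ge 1$ for all $i$, which in the present situation is already granted by the explicit exponent list; this is fine as a consistency check, but the actual content of the ``upper bound'' clause is a reference to \cite{BSDS}, so strictly speaking one should quote that proposition rather than rederive a bound from the exponent constraints.
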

\begin{remark}
If we consider the arrangement $\mathcal{H}_{10}$ defined by $Q_{10}$, then it is not free since the minimal free resolution of the Milnor algebra has the following form:
$$0 \rightarrow S(-22)^{3} \rightarrow S^{5}(-21) \oplus S^{12}(-18) \rightarrow S^{15}(-17)\rightarrow S,$$
which is quite surprising.
\end{remark}

Our very ample numerical experiments suggest that the full determinantal arrangement $\mathcal{H} \, : f_{1} \cdots f_{10} = 0$ should be free with the exponents $(\underbrace{1, ..., 1}_{9 \text{ times}}, \underbrace{4, ..., 4}_{5 \text{ times}})$. In order to verify our claim we also checked other larger arrangements of hyperplanes, for instance we can verify that $\mathcal{C} \, : \, f_{1}f_{2}f_{3}f_{4}f_{7}f_{8}f_{9} = 0$ is free with the exponents $(\underbrace{1, ..., 1}_{12 \text{ times}}, 4,4)$. However, the derivations of degree $4$ seem to us that they do not have a natural geometric or algebraic explanation so it is very hard to find the basis of derivations for $\mathcal{H}$. We hope to solve this problem in the nearest future.
\section{Proofs}
We start with our proof of Theorem \ref{maintheorem}.
\begin{proof}
We are going to apply directly Saito's criterion. In order to do so, we need to find a basis of the $S$-modules ${\rm AR}(F_{j})$ for each $j \in \{5, ..., 10\}$. This means that in each case we need to find $14$ derivations for each ${\rm AR}(F_{j})$. Since for each choice of $F_{j}$ the procedure goes along the same lines, let us focus on the first case $F_{5} = f_{1}f_{2}f_{3}f_{4}f_{5}$.

We start with a group of (obvious to see) derivations, namely
\begin{center}
\begin{tabular}{l}
$\theta_{1} = z_{1}\cdot \partial_{x_{1}} + z_{2}\cdot \partial_{x_{2}} + z_{3}\cdot \partial_{x_{3}} + z_{4}\cdot\partial_{x_{4}}+z_{5}\cdot \partial_{x_{5}}$,\\
$\theta_{2} = z_{1}\cdot \partial_{y_{1}} + z_{2}\cdot \partial_{y_{2}} + z_{3}\cdot \partial_{y_{3}} + z_{4}\cdot\partial_{y_{4}}+z_{5}\cdot \partial_{y_{5}}$, \\
$\theta_{3} = y_{1}\cdot \partial_{x_{1}} + y_{2}\cdot \partial_{x_{2}} + y_{3}\cdot \partial_{x_{3}} + y_{4}\cdot\partial_{x_{4}} + y_{5}\cdot \partial_{x_{5}}$, \\
$\theta_{4} = y_{1}\cdot \partial_{z_{1}} + y_{2}\cdot \partial_{z_{2}} + y_{3}\cdot \partial_{z_{3}} + y_{4}\cdot\partial_{z_{4}} + y_{5}\cdot \partial_{z_{5}}$, \\
$\theta_{5} = x_{1}\cdot \partial_{y_{1}} + x_{2}\cdot \partial_{y_{2}} + x_{3}\cdot \partial_{y_{3}} + x_{4}\cdot\partial_{y_{4}}+x_{5}\cdot \partial_{y_{5}}$,\\
$\theta_{6} = x_{1}\cdot \partial_{z_{1}} + x_{2}\cdot \partial_{z_{2}} + x_{3}\cdot \partial_{z_{3}} + x_{4}\cdot\partial_{z_{4}}+x_{5}\cdot \partial_{z_{5}}$,\\
$\theta_{7} = x_{2}\cdot \partial_{x_{5}} + y_{2}\cdot \partial_{y_{5}} + z_{2}\cdot \partial_{z_{5}}$,\\
$\theta_{8} = x_{1}\cdot \partial_{x_{5}} + y_{1}\cdot \partial_{y_{5}} + z_{1}\cdot \partial_{z_{5}}$,\\
$\theta_{9} = y_{1}\cdot \partial_{y_{1}} + y_{2}\cdot \partial_{y_{2}} + y_{3}\cdot \partial_{y_{3}} + y_{4}\cdot \partial_{y_{4}} + y_{5}\cdot \partial_{y_{5}} -z_{1}\partial_{z_{1}} -z_{2}\partial_{z_{2}} -z_{3}\partial_{z_{3}} -z_{4}\partial_{z_{4}} - z_{5}\partial_{z_{5}}$.
\end{tabular}
\end{center}

We have additionally $5$ non-obvious-to-see relations among the partials derivatives (we have found them with use of \verb{Singular{ \cite{Singular}), namely:
\begin{center}
\begin{tabular}{l}
$\theta_{10} = 5x_{5} \cdot \partial_{x_{5}} + 5y_{5}\cdot \partial_{y_{5}} - z_{1}\cdot \partial_{z_{1}} - z_{2}\cdot \partial_{z_{2}} - z_{3}\cdot \partial_{z_{3}} - z_{4}\cdot\partial_{z_{4}} + 4z_{5}\cdot\partial_{z_{5}}$,\\
$\theta_{11} = 5x_{4}\cdot\partial_{x_{4}} + 5y_{4}\cdot \partial_{y_{4}} - 3z_{1}\cdot \partial_{z_{1}} - 3z_{2} \cdot \partial_{z_{2}} - 3z_{3} \cdot \partial_{z_{3}} + 2z_{4} \cdot \partial_{z_{4}} - 3z_{5} \cdot \partial_{z_{5}},$ \\
$\theta_{12} = 5x_{3}\cdot \partial_{x_{3}} -3y_{1}\cdot \partial_{y_{1}} - 3y_{2}\cdot \partial_{y_{2}} + 2y_{3}\cdot \partial_{y_{3}} - 3y_{4}\cdot \partial_{y_{4}} - 3y_{5}\cdot \partial_{y_{5}} + 5z_{3}\cdot \partial_{z_{3}},$\\
$\theta_{13} = 5x_{1} \cdot \partial_{x_{1}} + 5y_{1} \cdot \partial_{y_{1}} + z_{1}\cdot \partial_{z_{1}} - 4z_{2}\cdot \partial_{z_{2}} - 4z_{3}\cdot \partial_{z_{3}} - 4z_{4}\cdot \partial_{z_{4}} - 4z_{5}\cdot \partial_{z_{5}} ,$
\end{tabular}
\end{center}
and
\begin{align*}
\theta_{14} = 5x_{2}\cdot \partial_{x_{2}} -3y_{1}\cdot \partial_{y_{1}} + 2y_{2}\cdot\partial_{y_{2}} - 3y_{3}\cdot \partial_{y_{3}} - 3y_{4} \cdot \partial_{y_{4}} - 3y_{5} \cdot \partial_{y_{5}} - z_{1} \cdot \partial_{z_{1}} + 4z_{2} \cdot \partial_{z_{2}} \\ - z_{3} \cdot \partial_{z_{3}} - z_{4} \cdot \partial_{z_{4}} - z_{5} \cdot \partial_{z_{5}}. 
\end{align*}
Now we are going to construct Saito's matrix. In order to do so, let us write the coefficients of all $\theta_{i}$'s as the columns, and for the Euler derivation $E = \sum_{i=1}^{5}x_{i}\cdot \partial_{x_{i}} +  \sum_{j=1}^{5}y_{j}\cdot \partial_{y_{j}} +  \sum_{i=k}^{5}z_{k}\cdot \partial_{z_{k}}$ we write $r_{0} = (x_{1},..., x_{5}, y_{1}, ..., y_{5}, z_{1}, ...,z_{5})^{t}.$ \\
Then we get the following matrix
$$A = 
\left(\begin{tabu}{rrrrrrrrrrrrrrr}
x_1 & z_1 & 0 &  y_1 & 0&  0& 0&0&0&0&0& 5x_1 & 0& 0& 0 \\
x_2 & z_2 & 0&  y_2 & 0&  0&  0&    0&    0&    5x_2 &  0&  0&    0& 0& 0 \\
x_3 & z_3 & 0&  y_3 & 0&  0&  0&    0&    5x_3 & 0&     0&  0&    0& 0& 0 \\
x_4 & z_4 & 0&  y_4 & 0&  0&  0&    5x_4 & 0&    0&     0&  0&    0& 0& 0 \\
x_5 & z_5 & 0&  y_5 & 0&  0&  5x_5 & 0&    0&    0&    x_2 &  0&    x_1 & 0 & 0 \\
y_1 & 0& z_1 & 0&  y_1 & 0&  0&    0 &   -3y_1 &-3y_1 & 0& 5y_1 &  0& x_1 &0 \\
y_2 & 0& z_2 & 0&  y_2 & 0&  0&    0 &   -3y_2 & 2y_2 &  0&  0&    0& x_2 &0 \\
y_3 & 0& z_3 & 0&  y_3 & 0&  0&    0 &    2y_3 & -3y_3 & 0&  0&    0& x_3 &0 \\
y_4 & 0& z_4 & 0&  y_4 & 0&  0& 5y_4 &-3y_4 & -3y_4 & 0&  0&    0& x_4 &0 \\
y_5 & 0& z_5 & 0& y_5  & 0&5y_5 & 0 &-3y_5 & -3y_5 & y_2& 0&    y_1 & x_5 &0 \\
z_1 & 0& 0& 0& -z_1 & y_1 &-z_1 & -3z_1 &   0&-z_1 &   0&  z_1 &    0 &  0& x_1 \\
z_2 & 0& 0& 0& -z_2 & y_2 &-z_2 & -3z_2 &   0& 4z_2 & 0& -4z_2 &  0 &  0& x_2 \\
z_3 & 0& 0& 0& -z_3 & y_3 &-z_3 & -3z_3 &  5z_3 &-z_3 &   0&-4z_3 &  0 &  0& x_3 \\
z_4 & 0& 0& 0& -z_4 & y_4 &-z_4 &  2z_4 &  0& -z_4 &   0& -4z_4 &  0 &  0& x_4 \\
z_5 & 0& 0& 0& -z_5 & y_5 &4z_5 & -3z_5 &  0& -z_5 &  z_2 & -4z_5 & z_1 & 0& x_5
\end{tabu}\right).$$
After some cumbersome computations we obtain
$${\rm Det}(A) = 9375 \cdot F_{5},$$
which completes the proof.
\end{proof}

Now we are going to sketch the proof of Theorem \ref{mid}.
\begin{proof}
Once again, we are going to apply Saito's criterion. We focus on the case $k=7$ since other cases can be show in analogical way. The proof is heavily based on \verb{Singular{ computations and experiments. We can find polynomial derivations that preserves $\mathcal{H}$, namely
\begin{center}
\begin{tabular}{l}
$\theta_{1} = z_{1}\cdot \partial_{x_{1}} + z_{2}\cdot \partial_{x_{2}} + z_{3}\cdot \partial_{x_{3}} + z_{4}\cdot\partial_{x_{4}}+z_{5}\cdot \partial_{x_{5}}$,\\

$\theta_{2} = z_{1}\cdot \partial_{y_{1}} + z_{2}\cdot \partial_{y_{2}} + z_{3}\cdot \partial_{y_{3}} + z_{4}\cdot\partial_{y_{4}}+z_{5}\cdot \partial_{y_{5}}$, \\

$\theta_{3} = y_{1}\cdot \partial_{x_{1}} + y_{2}\cdot \partial_{x_{2}} + y_{3}\cdot \partial_{x_{3}} + y_{4}\cdot\partial_{x_{4}} + y_{5}\cdot \partial_{x_{5}}$, \\

$\theta_{4} = y_{1}\cdot \partial_{z_{1}} + y_{2}\cdot \partial_{z_{2}} + y_{3}\cdot \partial_{z_{3}} + y_{4}\cdot\partial_{z_{4}}+y_{5}\cdot \partial_{z_{5}}$,\\ 

$\theta_{5} = 3 x_{5}\cdot \partial_{x_{5}} + 3 y_{5}\cdot \partial_{y_{5}} - z_{1}\cdot \partial_{z_{1}} - z_{2}\cdot\partial_{z_{2}}-z_{3}\cdot \partial_{z_{3}}-z_{4}\cdot \partial_{z_{4}}+2 z_{5}\cdot \partial_{z_{5}}$,\\

$\theta_{6} = 2 x_{4}\cdot \partial_{x_{4}} + 2 y_{4}\cdot \partial_{y_{4}} - z_{1}\cdot \partial_{z_{1}}- z_{2}\cdot \partial_{z_{2}}- z_{3}\cdot \partial_{z_{3}}+ z_{4}\cdot \partial_{z_{4}} - z_{5}\cdot \partial_{z_{5}}$,\\

$\theta_{7} = 3 x_{3}\cdot \partial_{x_{3}} + 3 y_{3}\cdot \partial_{y_{3}} - 2 z_{1}\cdot \partial_{z_{1}}-2  z_{2}\cdot \partial_{z_{2}} + z_{3}\cdot \partial_{z_{3}}-2 z_{4}\cdot \partial_{z_{4}} -2  z_{5}\cdot \partial_{z_{5}}$,\\

$\theta_{8} = 6 x_{2}\cdot \partial_{x_{2}} + 6 y_{2}\cdot \partial_{y_{2}} - 5 z_{1}\cdot \partial_{z_{1}}+z_{2}\cdot \partial_{z_{2}} -5  z_{3}\cdot \partial_{z_{3}}-5 z_{4}\cdot \partial_{z_{4}} -5  z_{5}\cdot \partial_{z_{5}}$,\\

$\theta_{9} = x_{2}\cdot \partial_{x_{5}} + y_{2}\cdot \partial_{y_{5}}+z_{2}\cdot \partial_{z_{5}}$,\\

$\theta_{10} =3 x_{1}\cdot \partial_{x_{1}} +3 y_{1}\cdot \partial_{y_{1}}+z_{1}\cdot \partial_{z_{1}}-2 z_{2}\cdot \partial_{z_{2}}
-2 z_{3}\cdot \partial_{z_{3}}
-2 z_{4}\cdot \partial_{z_{4}}
-2 z_{5}\cdot \partial_{z_{5}}$,\\

$\theta_{11} = x_{1}\cdot \partial_{y_{1}} + x_{2}\cdot \partial_{y_{2}}+ x_{3}\cdot \partial_{y_{3}}+ x_{4}\cdot \partial_{y_{4}}+ x_{5}\cdot \partial_{y_{5}}$,\\

$\theta_{12} = x_{1}\cdot \partial_{z_{1}} + x_{2}\cdot \partial_{z_{2}}+ x_{3}\cdot \partial_{z_{3}}+ x_{4}\cdot \partial_{z_{4}}+ x_{5}\cdot \partial_{z_{5}}$,\\

\end{tabular}
\end{center}
\begin{align*}
\theta_{13} = y_{1}\cdot \partial_{y_{1}} + y_{2}\cdot \partial_{y_{2}} + y_{3}\cdot \partial_{y_{3}} + y_{4}\cdot\partial_{y_{4}} + y_{5}\cdot \partial_{y_{5}}- z_1 \cdot \partial_{z_{1}} - z_2 \cdot \partial_{z_{2}}- z_3 \cdot \partial_{z_{3}} &- z_4 \cdot \partial_{z_{4}} \\&- z_5 \cdot \partial_{z_{5}}, 
\end{align*}
and

\begin{table}[ht]
\begin{center}
\begin{tabularx}{0.96\textwidth}{lX}
$\theta_{14}=$         & $ 3  x_1  x_3  y_2  z_2 \cdot  \partial_{x_{2}}
+180  x_1  x_2  y_3  z_3 \cdot\partial_{x_{3}} 
  +(192x_1  x_2  y_4  z_3 - 9x_1  x_3  y_4  z_2 + 12x_1  x_3  y_2  z_4 -12x_1  x_2   y_3  z_4)\cdot \partial_{x_{4}}
  +(15x_1  x_3  y_5   z_2 - 12x_1  x_3  y_2  z_5)\cdot \partial_{x_{5}}
  +(3  x_3  y_1  y_2  z_2+60  x_2  y_1  y_2  z_3-60  x_1   y_2^2  z_3) \cdot\partial_{y_{2}}
   +(3  x_3  y_1  y_3  z_2-3  x_1  y_{3}^{2}  z_2 - 120 x_3  y_1  y_2  z_3\: +  180   x_2  y_1  y_3  z_3+120  x_1  y_2  y_3  z_3)\cdot\partial_{y_{3}}
   +(12  x_4  y_1  y_3  z_2 - 9 x_3  y_1  y_4   z_2-12  x_1  y_3  y_4  z_2 - 132  x_4  y_1  y_2  z_3+192  x_2  y_1  y_4  z_3+132  x_1  y_2   y_4  z_3+12  x_3  y_1  y_2  z_4-12  x_2  y_1  y_3  z_4)\cdot\partial_{y_{4}}
   +( 15x_3  y_1  y_5  z_2 -12  x_5  y_1  y_3  z_2 + 12  x_1  y_3  y_5  z_2 + 60  x_2  y_1  y_5  z_3-60  x_1  y_2  y_5  z_3-  12  x_3  y_1  y_2  z_5+12  x_2  y_1  y_3  z_5-12  x_1  y_2  y_3  z_5)\cdot~\partial_{y_{5}}
   +(4  x_1  y_3  z_2^2 -x_3  y_1  z_2^2 + 4  x_3  y_2  z_1  z_2-4  x_2  y_3  z_1  z_2+176  x_2  y_2  z_1  z_3  - 204  x_2  y_1  z_2  z_3+28  x_1  y_2  z_2  z_3)\cdot\partial_{z_{2}}
   +(204x_2   y_3  z_1  z_3 - 28x_3  y_2  z_1  z_3 - 24x_2  y_1  z_3^2+28  x_1  y_2  z_3^2+181  x_1  y_3  z_2  z_3-181  x_3   y_1  z_2  z_3)\cdot\partial_{z_{3}}
   +(8x_4  y_3  z_1  z_2 - 8x_3  y_4  z_1  z_2 - 40x_4  y_2  z_1  z_3 + 216  x_2  y_4  z_1  z_3 - 180x_4  y_1  z_2  z_3 + 180x_1  y_4  z_2  z_3 + 12x_3  y_2  z_1  z_4 
-12x_2  y_3  z_1  z_4-x_3  y_1  z_2  z_4-8  x_1  y_3  z_2  z_4-24  x_2  y_1  z_3  z_4+  40  x_1  y_2  z_3  z_4)\cdot\partial_{z_{4}}
   +(16  x_3  y_5  z_1  z_2 -16  x_5  y_3  z_1  z_2 - 16  x_5  y_2   z_1  z_3 + 192  x_2  y_5  z_1  z_3 - 72  x_5  y_1  z_2  z_3 + 12  x_1  y_5  z_2  z_3 - 12  x_3   y_2  z_1  z_5 + 12x_2  y_3  z_1  z_5 - x_3  y_1  z_2  z_5 + 4  x_1  y_3  z_2  z_5 - 132x_2    y_1  z_3  z_5 + 16  x_1  y_2  z_3  z_5)\cdot\partial_{z_{5}}.$
\end{tabularx}
\end{center}
\end{table}

We claim that the set $\{E, \theta_{1}, \theta_{2}, ..., \theta_{14}\}$
gives us a basis for $D(\mathcal{H})$. It is enough to observe that the determinant of Saito's matrix $A$ is equal to
$${\rm Det}(A) = 23328 \cdot Q_{7},$$
which completes the proof.
\end{proof}
\section{Further numerical experiments}
In order to understand better the geometry of determinantal hyperplane arrangements we decided to investigate all possible arrangements $\mathcal{C}$ given by triplets $F_{ijk} = f_{i}f_{j}f_{k}$ and given by $4$-tuples $F_{ijkl} = f_{i}f_{j}f_{k}f_{l}$ provided that the indices are pairwise distinct.
Our first observation is the following.
\begin{proposition}
Let $\mathcal{C} \subset \mathbb{P}^{14}_{\mathbb{C}}$ be a determinantal arrangement defined by the equation $F_{ijk} = f_{i}f_{j}f_{k}$, where $i,j,k \in \{1, ..., 10\}$ and the indices are pairwise distinct. Then $\mathcal{C}$ is never free.
\end{proposition}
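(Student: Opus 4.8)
The plan is to rule out all $\binom{10}{3}=120$ triples at once by a numerical argument, with no case analysis. First I would recall that a free divisor $\mathcal{C}=V(F)\subset\mathbb{P}^{n}$ of degree $d$ has Cohen--Macaulay Milnor algebra $S/J_{F}$ of codimension $2$, and that the nonzero exponents of $D_{0}(\mathcal{C})$ are then positive integers whose sum equals $d-1$ (this follows by comparing Hilbert series across the length-$2$ minimal free resolution of $S/J_{F}$; it is the identity behind the exponents in Theorem~\ref{theoremA} and Theorem~\ref{mid}). In particular the number of nonzero exponents is at most $d-1$; since the total number of exponents is $n$ and the number of zero exponents equals $\dim_{\mathbb{C}}D_{0}(\mathcal{C})_{0}$, the dimension of the degree-zero graded piece of $D_{0}(\mathcal{C})$ — a zero exponent being precisely a constant-coefficient derivation annihilating $F$ — we obtain
$$n-\dim_{\mathbb{C}}D_{0}(\mathcal{C})_{0}\le d-1.$$
For $F=F_{ijk}$, where $n=14$ and $d=9$, this forces $\dim_{\mathbb{C}}D_{0}(\mathcal{C})_{0}\ge 6$.

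Next I would compute $D_{0}(\mathcal{C})_{0}$ for $F=f_{i}f_{j}f_{k}$ and show its dimension is at most $3$. Let $D=\sum_{v}c_{v}\,\partial_{v}$ be a constant-coefficient derivation with $D(F)\in(F)$; as $\deg D(F)\le 8<9=\deg F$, necessarily $D(F)=0$. Expanding $D(F)=D(f_{i})\,f_{j}f_{k}+D(f_{j})\,f_{i}f_{k}+D(f_{k})\,f_{i}f_{j}$ and reducing modulo $f_{i}$, which is irreducible and coprime to $f_{j}$ and $f_{k}$, gives $f_{i}\mid D(f_{i})$; since $\deg D(f_{i})\le 2<3$ this forces $D(f_{i})=0$, and likewise $D(f_{j})=D(f_{k})=0$. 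For a single $3$-minor $f_{l}$, the derivatives $\partial_{v}f_{l}$ with $v$ running over the nine indeterminates occurring in $f_{l}$ are, up to sign, the nine $2\times2$ minors of the corresponding $3\times3$ submatrix, and these have pairwise disjoint monomial supports, hence are $\mathbb{C}$-linearly independent; so $D(f_{l})=0$ forces $c_{v}=0$ for every $v$ occurring in $f_{l}$. Letting $l$ run over $\{i,j,k\}$ shows that the only such $D$ are the coordinate fields along indeterminates \emph{not} occurring in $F$, so that $\dim_{\mathbb{C}}D_{0}(\mathcal{C})_{0}=15-\#\{\text{indeterminates occurring in }F\}$. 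Since $\#\{\text{indeterminates occurring in }F_{ijk}\}=3\,|C_{i}\cup C_{j}\cup C_{k}|$ with $C_{i},C_{j},C_{k}\subseteq\{1,\dots,5\}$ the three column-triples of the minors, and any two distinct $3$-subsets of a $5$-set already have union of size $\ge 4$, we get $\#\{\text{indeterminates occurring in }F_{ijk}\}\ge 12$ and hence $\dim_{\mathbb{C}}D_{0}(\mathcal{C})_{0}\le 3$.

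Putting the two steps together: a free $\mathcal{C}=V(F_{ijk})$ would require $\dim_{\mathbb{C}}D_{0}(\mathcal{C})_{0}\ge 6$, whereas in fact $\dim_{\mathbb{C}}D_{0}(\mathcal{C})_{0}\le 3$; this contradiction shows no such $\mathcal{C}$ is free. I expect the one point needing care to be the identification of $D_{0}(\mathcal{C})_{0}$ with the span of the coordinate fields along absent indeterminates: the reduction modulo $f_{i}$ and the linear independence of the $2\times2$ minors must be written out, but the rest is routine. A more computational alternative would use the $S_{5}$-action permuting the columns of $M$ to reduce the $120$ triples to four orbits — according to whether the three complementary $2$-subsets, regarded as a $3$-edge subgraph of $K_{5}$, form a triangle, a path on four vertices, a claw, or a path on three vertices plus a disjoint edge — and then check with \verb|Singular| in each of the four representative cases that the Milnor algebra has projective dimension $3$; there the only work is running the four computations.
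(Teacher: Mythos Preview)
Your argument is correct and is genuinely different from the paper's. The paper simply runs a \texttt{Singular} loop over all $120$ triples and reports that in every case $\mathrm{pdim}(S/J_{F_{ijk}})>2$; no structural reason is given. You instead give a uniform counting argument: if $\mathcal{C}=V(F_{ijk})$ were free then the $14$ exponents would be nonnegative integers summing to $d-1=8$, forcing at least six of them to vanish and hence $\dim_{\mathbb{C}} D_{0}(\mathcal{C})_{0}\ge 6$; but a direct computation of the constant-coefficient syzygies, using irreducibility of the minors, the Leibniz rule, and the fact that the nine $2\times2$ cofactors of a generic $3\times3$ matrix have pairwise disjoint monomial supports, shows $\dim_{\mathbb{C}} D_{0}(\mathcal{C})_{0}\le 3$. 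Every step checks out, including the bound $|C_{i}\cup C_{j}\cup C_{k}|\ge 4$ for distinct $3$-subsets of $\{1,\dots,5\}$.

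The payoff of your approach is that it explains \emph{why} the triples fail: the degree is simply too small relative to the ambient dimension to accommodate $14$ exponents once you know there are almost no constant syzygies. It also applies verbatim to larger generic $3\times n$ matrices (any product of three maximal minors is never free in $\mathbb{P}^{3n-1}$, by the same inequality $3n-1-3\le 8$ failing for $n\ge 5$), whereas the paper's method is tied to a finite computation. The paper's computational route, on the other hand, yields slightly more: it produces the actual minimal free resolutions and projective dimensions, information your argument does not see. Your alternative $S_{5}$-orbit reduction to four graph types is also correct and would already be a substantial improvement over checking $120$ cases blindly.
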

\begin{proof}
Using a simple \verb{Singular{ routine we examined all choices of indices obtaining $120$ determinantal arrangements, and in each case ${\rm pdim}(S/J_{F_{ijk}}) >2$, which completes the proof.
\end{proof}
After that we focused on determinantal arrangements $\mathcal{C}$ given by $F_{ijkl} = f_{i}f_{j}f_{k}f_{l}$. We have exactly $210$ such arrangements, and among them we have exactly $5$ special arrangements, namely
\begin{itemize}
\item[a)] $\mathcal{C}_{1} \subset \mathbb{P}^{14}_{\mathbb{C}}$ given by $F_{1234}$,
\item[b)] $\mathcal{C}_{2} \subset \mathbb{P}^{14}_{\mathbb{C}}$ given by $F_{1567}$,
\item[c)] $\mathcal{C}_{3} \subset \mathbb{P}^{14}_{\mathbb{C}}$ given by $F_{2589}$,
\item[d)] $\mathcal{C}_{4} \subset \mathbb{P}^{14}_{\mathbb{C}}$ given by $F_{36810}$,
\item[e)] $\mathcal{C}_{5} \subset \mathbb{P}^{14}_{\mathbb{C}}$ given by $F_{47910}$.
\end{itemize}
These arrangements can be viewed as determinantal arrangements constructed as products of the maximal minors of appropriate generic $3\times 4$ matrix of indeterminantes. Thus by a result due to Buchweitz and Mond \cite{Buch} arrangements $\mathcal{C}_i$ with $i \in \{1,2,3,4,5\}$ are free.

Another important class of hypersurface arrangements was introduced by Bu\'se, Dimca, Schenck, and Sticlaru, and such arrangements are called \emph{nearly-free}.
\begin{definition}(\cite[Definition 2.6]{BSDS})
A reduced hypersurface $\mathcal{C} \subset \mathbb{P}^{n}_{\mathbb{C}}$ given by $F = 0$ is nearly-free if its Milnor algebra $M(F)$ admits a graded free resolution of the form
$$0\rightarrow S(-d_{n}-d) \rightarrow S(-d_{n}-d+1)\oplus\bigg(\oplus_{i=0}^{n-1}S(-d_{i}-d+1)\bigg) \rightarrow S^{n+1}(d+1)\rightarrow S$$
for some integers $d_{0} \leq d_{1} \leq d_{2} \leq \ldots d_{n}$.
\end{definition}
Next, we checked whether some of the remaining $205$ determinantal arrangements $\mathcal{C}$ given by $F_{ijkl}=0$ are nearly-free. It turns out that among $205$ arrangements we found $58$ having this peculiar property that their Milnor algebras $M(F_{ijkl})$ have the following minimal resolution:
$$0\rightarrow S(-15) \rightarrow S^{15}(-12) \rightarrow S^{15}(-11)\rightarrow S,$$
so these are not nearly-free arrangements, but to some extend these are close to them. Having a complete picture of the minimal resolution we can also calculate the regularity of $S/J_{F_{ijkl}}$ which is equal to
$${\rm reg}(S/J_{F_{ijkl}})=12.$$

\section*{Acknowledgments}
We would like to warmly thank Piotr Pokora for useful comments and suggestions.

Paulina Wi\'sniewska was partially supported by the National Science Center (Poland) Sonata Grant Nr \textbf{2018/31/D/ST1/00177}.

\vskip 0.5 cm

\bigskip
Marek Janasz,
Department of Mathematics,
Pedagogical University of Krakow,
ul. Podchorazych 2,
PL-30-084 Krak\'ow, Poland. \\
\nopagebreak
\textit{E-mail address:} \texttt{marek.janasz@up.krakow.pl}
\bigskip

Paulina Wi\'sniewska,
Department of Mathematics and Doctoral School,
Pedagogical University of Krakow,
ul. Podchorazych 2,
PL-30-084 Krak\'ow, Poland. \\
\nopagebreak
\textit{E-mail address:} \texttt{wisniewska.paulina.m@gmail.com}

\end{document}